\newtheorem{theorem}{Theorem}[section]
\newtheorem{lemma}{Lemma}[section]
\def\k{\kappa}
\def\R{\mathbb R}
\def\sn{\mathrm{sn}}
\def\Ric{\mathrm{Ric}}
\def\Li{\mathrm{Lip}}
\def\<{\langle}
\def\>{\rangle}
\begin{document}

\title{A note on local gradient estimate on  Alexandrov spaces}\let\thefootnote\relax\footnote{ 
2010 \textit{Mathematics Subject Classification}.
Primary 51F99; Secondary 51K10, 53C21,31C05.
}
\let\thefootnote\relax\footnote{ 
\textit{Key words and phrases}. Alexandrov spaces, harmonic
functions, Yau's gradient estimate. }

\author{Bobo Hua and Chao Xia}
\address{Max-Planck-Institut f\"ur Mathematik in den Naturwissenschaft, Inselstr. 22, D-04103, Leipzig, Germany}
\email{bobo.hua@mis.mpg.de,  chao.xia@mis.mpg.de}


\maketitle


\begin{abstract} In this note, we prove Cheng-Yau type local gradient
estimate for harmonic functions on  Alexandrov spaces with Ricci
curvature bounded below. We adopt a refined version of Moser's
iteration which is based on Zhang-Zhu's Bochner type formula in
\cite{ZZ3}. Our result improves the previous one of Zhang-Zhu
\cite{ZZ3} in the case of negative Ricci lower bound.
\end{abstract}


\section{Introduction}

In 1975, Yau \cite{Y} proved that complete Riemannian manifolds with
nonnegative Ricci curvature have the Liouville property. Later,
Cheng-Yau \cite{CY} proved the following local version of Yau's
gradient estimate.

\

\noindent{\bf Theorem A} (Yau \cite{Y}, Cheng-Yau \cite{CY}). {\it
Let $M^n$ be an n-dimensional complete noncompact Riemannian
manifold with Ricci curvature bounded from
below by $-K$ $ (K \geq 0)$. Then there exists a
constant $C=C(n)$, depending only on n, such that every positive
harmonic function $u$ on geodesic ball $B_{2R}\subset M$  satisfies
\begin{eqnarray*}
\frac{|\nabla  u|}{u}\leq C\frac{1+\sqrt{K}R}{R} \ \ \ \ \
\mathrm{in}\ B_R.
\end{eqnarray*}}

\

To prove the regularity of harmonic functions on Alexandrov spaces is a
challenging problem because of the lack of the smoothness of the
metrics; the H\"older continuity of harmonic functions is well-known
(see e.g. Kuwae-Machigashira-Shioya \cite{KMS}). In 1996, Petrunin
\cite{P} proved the Lipschitz continuity of harmonic functions on
Alexandrov spaces. Recently, Zhang-Zhu \cite{ZZ1} introduced a
notion of Ricci curvature on Alexandrov spaces. Using a delicate
argument initiated by Petrunin, Zhang-Zhu \cite{ZZ3} proved the
Bochner formula on Alexandrov spaces which gives a quantitative
estimate, i.e., Yau's gradient estimate for harmonic functions.

\

\noindent{\bf Theorem B} (Zhang-Zhu \cite{ZZ3}). {\it Let $X$ be an
$n$-dimensional Alexandrov space with Ricci curvature bounded from
below by $-K$ $(K\geq 0)$, and let $\Omega$ be a bounded domain in
$X$. Then there exists a constant $C=C(n,
\sqrt{K}\mathrm{diam}(\Omega))$, depending on $n$ and
$\sqrt{K}\mathrm{diam}(\Omega)$, such that every positive harmonic
function $u$ on $\Omega$ satisfies
\begin{eqnarray}
\frac{|\nabla  u|}{u}\leq C\frac{1+\sqrt{K}R}{R} \ \ \ \ \
\mathrm{in}\ B_R,
\end{eqnarray}
for any geodesic ball $B_{2R}\subset \Omega$. If $K=0$, the constant $C$
depends only on $n$. }

\

For the case $K>0$ Theorem B is not satisfactory, compared with
Theorem A, since the constant $C$ depends not only on the dimension
$n$ but also on $\sqrt{K}R$. In this note, we refine the argument of
Zhang-Zhu \cite{ZZ3} and
derive a local gradient estimate analogous to the Riemannian case.
Our main result is the following.

\begin{theorem}\label{main thm}
Let $X$ be an $n$-dimensional Alexandrov space with Ricci curvature
bounded from below by $-K$ $(K> 0)$. Then there exists a constant
$C=C(n)$, depending only on $n,$ such that every positive harmonic
function $u$ on geodesic ball $B_{2R}\subset M$  satisfies
\begin{eqnarray*}
\frac{|\nabla  u|}{u}\leq C\frac{1+\sqrt{K}R}{R} \ \ \ \ \
\mathrm{in}\ B_R.
\end{eqnarray*}
\end{theorem}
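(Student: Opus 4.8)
The plan is to set $f=\log u$, which is well defined and locally Lipschitz on $B_{2R}$ since the positive harmonic function $u$ is continuous, hence bounded away from $0$ on compact subsets, and Lipschitz by Petrunin's theorem \cite{P}. Writing $w:=|\nabla f|^2=|\nabla u|^2/u^2$ and using $\Delta f=-w$ (as $u$ is harmonic) in the Bochner type formula of Zhang--Zhu \cite{ZZ3}, one obtains, in the weak sense on $B_{2R}$,
$$\div\left(u^2\nabla w\right)\ \ge\ u^2\left(\tfrac2n\,w^2-2Kw\right),$$
i.e. $\int u^2\langle\nabla w,\nabla\varphi\rangle\,d\mathrm{vol}\le\int u^2\big(2Kw-\tfrac2n w^2\big)\varphi\,d\mathrm{vol}$ for every nonnegative $\varphi\in\mathrm{Lip}(B_{2R})$ with compact support; here one uses $\nabla(u^2)=2u^2\nabla f$ to fold the drift term $2\langle\nabla f,\nabla w\rangle$ (arising from $\langle\nabla f,\nabla\Delta f\rangle$ in Bochner's identity) into the weighted divergence. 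Discarding the favourable term $\tfrac2n w^2$, the nonnegative function $w$ is a weak subsolution of $\div(u^2\nabla\,\cdot\,)$ with potential $2K$; the $W^{1,2}_{\mathrm{loc}}$ regularity of $w$ needed to work with this inequality follows from $|\nabla u|^2\in W^{1,2}_{\mathrm{loc}}$ (part of the analysis in \cite{ZZ3}) together with $u$ being Lipschitz and bounded below.

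The crux, and the point where one beats the estimate of \cite{ZZ3}, is to run the Moser iteration only on balls whose radius is small relative to $K^{-1/2}$. Fix $x_0\in B_R$ and put $\rho:=\min\{R/4,\ c_0K^{-1/2}\}$ for a small absolute constant $c_0$, so that $B_{2\rho}(x_0)\subset B_{2R}$ and $\sqrt K\,\rho\le c_0$. On balls at this scale all the relevant analytic quantities are universal: by Bishop--Gromov and the segment inequality the space $X$ is volume doubling and supports a weak Poincaré inequality on $B_{2\rho}(x_0)$ with constants depending only on $n$ (the factors involving $\sqrt K\rho$ being $\le C(n)$), and by the Harnack inequality for the positive harmonic function $u$ on $B_{2\rho}(x_0)$ — whose constant is again $C(n)$ at this scale — the measure $d\mu:=u^2\,d\mathrm{vol}$ is comparable, up to a factor $C(n)$, to $d\mathrm{vol}$ on $B_\rho(x_0)$; hence $(\mu,\ \int u^2|\nabla\,\cdot\,|^2\,d\mathrm{vol})$ satisfies doubling and Poincaré with constants $C(n)$ there.

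Consequently the De Giorgi--Nash--Moser iteration for subsolutions of $\div(u^2\nabla\,\cdot\,)$ applies on $B_\rho(x_0)$ and gives
$$\sup_{B_{\rho/2}(x_0)}w\ \le\ \frac{C(n)}{\mu\left(B_\rho(x_0)\right)}\int_{B_\rho(x_0)}w\,d\mu\ =\ C(n)\,\frac{\int_{B_\rho(x_0)}|\nabla u|^2\,d\mathrm{vol}}{\int_{B_\rho(x_0)}u^2\,d\mathrm{vol}},$$
the constant absorbing the factor $1+K\rho^2\le C(n)$. Then the Caccioppoli inequality for $u$ yields $\int_{B_\rho(x_0)}|\nabla u|^2\,d\mathrm{vol}\le C\rho^{-2}\int_{B_{2\rho}(x_0)}u^2\,d\mathrm{vol}=C\rho^{-2}\mu\big(B_{2\rho}(x_0)\big)$, so by doubling of $\mu$ we get $\sup_{B_{\rho/2}(x_0)}w\le C(n)\rho^{-2}$. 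Unwinding $\rho$: if $\sqrt KR\le 4c_0$ then $\rho=R/4$ and the bound is $C(n)R^{-2}$; if $\sqrt KR>4c_0$ then $\rho=c_0K^{-1/2}$ and the bound is $C(n)K$. In both cases $\sup_{B_{\rho/2}(x_0)}w\le C(n)(1+\sqrt KR)^2/R^{2}$, and since $x_0\in B_R$ was arbitrary, $|\nabla\log u|^2=w\le C(n)(1+\sqrt KR)^2/R^2$ on $B_R$, i.e. $|\nabla u|/u\le C(n)(1+\sqrt KR)/R$, as claimed. (Retaining $\tfrac2n w^2$ in the iteration produces the same a priori bound directly from the superlinear absorption, which is the "refined" form of Moser iteration; the role of the localization is unchanged.)

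The main obstacle I anticipate is making rigorous the weak differential inequality for $w$ and the manipulations it must survive: the Bochner formula in \cite{ZZ3} is obtained via Petrunin's second-variation perturbation argument and carries its own regularity restrictions (one needs $|\nabla u|^2\in W^{1,2}_{\mathrm{loc}}$ and must handle the singular set), and one has to check that it can legitimately be tested against the functions $w^{p-1}\phi^2$ appearing in the iteration — which is where a truncation $w\wedge L$ and a limiting argument enter. A second, more bookkeeping-type obstacle is to verify that after the localization \emph{no} factor depending on $\sqrt K\,\mathrm{diam}(\Omega)$ — precisely the quantity that weakens the constant in Theorem B — survives in any of the doubling, Poincaré, Harnack or Caccioppoli constants, so that the final constant genuinely depends on $n$ alone.
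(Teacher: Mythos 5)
Your argument is essentially correct, but it follows a genuinely different route from the paper. The paper stays on the full ball $B_{2R}$, where the Sobolev constant degenerates like $e^{C(1+\sqrt{K}R)}$, and compensates for this by the Wang--Zhang device: using the good nonlinear term $-\frac{2}{n}f^{4}$ in the Bochner inequality to first prove an $L^{\beta_1}$ bound on $f=|\nabla\log u|^{2}$ with $\beta_1\sim 1+\sqrt{K}R$ (Lemma \ref{lem1}), and then running Moser's iteration from that high starting exponent so that the exponential factors, entering only through $e^{C\beta_0/\beta_k}$ with $\sum_k\beta_k^{-1}\sim\beta_0^{-1}$, stay bounded. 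You instead localize to the scale $\rho=\min\{R/4,c_0K^{-1/2}\}$, where the doubling and Poincar\'e constants from \eqref{BGV} and \eqref{wPo} are purely dimensional, fold the drift $2\langle\nabla\log u,\nabla f\rangle$ into the weighted operator $\mathrm{div}(u^{2}\nabla\cdot)$ (your weak formulation is exactly what one gets by testing \eqref{Peq2} with $\eta=u^{2}\varphi$, which is legitimate since $u$ is locally Lipschitz and locally bounded away from $0$), discard the nonlinear term, and close the estimate with Harnack for $u$, Caccioppoli, and the $L^{1}$ mean value inequality for weighted subsolutions. This is softer and more modular: the nonlinear term is not needed because the starting quantity $\int_{B_\rho}f\,d\mu=\int_{B_\rho}|\nabla u|^{2}$ is controlled by Caccioppoli rather than by the Bochner inequality, and no exponential bookkeeping occurs. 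The price is that you invoke stronger black boxes that the paper deliberately avoids and that you must substantiate with the correct quantitative dependence: (i) the scale-invariant elliptic Harnack inequality for positive harmonic functions on balls of radius $\lesssim K^{-1/2}$ with constant depending only on $n$ --- true, since it follows by Moser iteration from exactly the doubling and Poincar\'e constants above (cf.\ \cite{KMS},\cite{SC}), so there is no circularity, but it is not stated in this paper and needs a reference or a proof; (ii) the $L^{1}$-to-$L^{\infty}$ mean value inequality for subsolutions of the weighted operator with potential $2K$, with constant controlled by $1+K\rho^{2}$; and (iii) the Leibniz/chain rule manipulations for Lipschitz functions and the truncation argument you already flag when testing against powers of $f$. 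With those ingredients supplied, your unwinding of $\rho$ does give $|\nabla\log u|\le C(n)(1+\sqrt{K}R)/R$ on $B_R$, matching the theorem; the paper's proof, by contrast, is self-contained once the Sobolev inequality \eqref{Sobolev} is in place and shows explicitly how the nonlinearity beats the exponentially degenerating constant.
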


\

Owing to the lack of regularity of harmonic functions on Alexandrov
spaces, one cannot use the method of maximum principle that was used
in Yau \cite{Y} and Cheng-Yau \cite{CY}. As in Zhang-Zhu \cite{ZZ3},
we start with the Bochner formula established in \cite{ZZ3} and use
Moser's iteration argument. For Alexandrov spaces with negative
Ricci curvature, we refine a local uniform Sobolev inequality (see
Theorem \ref{Sob}) and obtain an $L^p$ estimate of $|\nabla u|^2$
for $p\sim 1+\sqrt{K}R;$ this estimate is a starting point of
Moser's iteration. This is adapted from the idea of Wang-Zhang
\cite{WZ}. The similar idea has been successfully applied to Finsler
manifolds by the second author \cite{X}.

\

The rest of the paper is organized as follows: In Section 2, we recall
some basic and known results on Alexandrov spaces. In
Section 3, we prepare the analytic tool , i.e., Sobolev inequality for
Moser's iteration. Section 4 is devoted to the proof of
Theorem \ref{main thm}.

\section{Preliminaries on Alexandrov spaces}
A metric space $(X,d)$ is called an Alexandrov space if it is a
complete locally compact geodesic space with sectional curvature
bounded below locally in the sense of Alexandrov, i.e., locally
satisfying Toponogov's triangle comparison and of finite Hausdorff
dimension. We refer to \cite{BBI,BGP} for the basic facts of
Alexandrov spaces. It is well-known that the Bishop-Gromov volume
comparison holds on Alexandrov spaces.

Kuwae-Shioya \cite{KS1,KS2,KS3,KS4} introduced and investigated a
notion of infinitesimal Bishop-Gromov volume comparison, $BG(n,\k)$.
Let $(X,d)$ be an $n$-dimensional Alexandrov space. Set for any
$\k \in \R,$
\[\sn_{\k}(t)=\left\{\begin{array}{ll}
\frac{\sin(\sqrt\k t)}{\sqrt{\k}}, & \k>0,\\
t,& \k =0,\\
  \frac{\sinh(\sqrt{|\k|}t)}{\sqrt{|\k|}}, &\k<0.
\end{array}\right.\]
For any $p\in X,$ denote by $\hbox{r}_p(x):=d(x,p)$ the distance
function from $p.$ For $p\in X$ and $0<t\leq 1,$ we define a subset
$W_{p,t}\subset X$ and a map $\Phi_{p,t}: W_{p,t}\to X$ as follows:
$x\in W_{p,t}$ if and only if there exists some $y\in X$ such that
$x\in py$ and $\hbox{r}_p(x):\hbox{r}_p(y)=t:1,$ where $py$ is a
minimal geodesic (shortest path) from $p$ to $y.$ For any $x\in
W_{p,t},$ such $y$ is unique (by Toponogov's triangle comparison)
and we define $\Phi_{p,t}(x)=y.$ Let $\mathcal{H}^n$ denote the
$n$-dimensional Hausdorff measure on $(X,d).$ The infinitesimal
Bishop-Gromov condition for $X$ with the measure $\mathcal{H}^n$,
$BG(n,\k)$, is defined as follows: For any $p\in X$ and $t\in(0,1],$
we have
\begin{equation}
d((\Phi_{p,t})_*\mathcal{H}^n)(x)\geq \frac{t\sn_{\k}(t \hbox{r}_p(x))^{n-1}}{\sn_{\k}(
\hbox{r}_p(x))^{n-1}}d\mathcal{H}^n(x)
\end{equation} for any $x\in X$ ($\hbox{r}_p(x)<\pi/{\sqrt{\k}}$ if $\k>0),$ where $(\Phi_{p,t})_*\mathcal{H}^n$ is
the push-forward of the measure $\mathcal{H}^n$ by $\Phi_{p,t}$. In
Riemannian case, the condition $BG(n,\k)$ is equivalent to $\Ric\geq
(n-1)\k.$ This definition reflects a key property of the Ricci
curvature on Riemannian manifolds, i.e., the infinitesimal volume
comprison. There exists another notion of volume comparison called
$MCP(n,\k)$, which was defined by Sturm \cite{St} and Ohta
\cite{O1}. $MCP(n,\k)$ is equivalent to $BG(n,\k)$ in the Alexandrov
space (see \cite{O1, KS3}).

The infinitesimal version of Bishop-Gromov volume comparison implies
the global version of Bishop-Gromov volume comparison, which is also
called relative volume comparison (see \cite{KS3}). Denote by
$B_R(p):=\{x\in X; d(x,p)<R\}$ the geodesic ball of radius $R$
centered at $p,$ by $|B_R(p)|:=\mathcal{H}^n(B_R(p))$ the volume of
the ball $B_R(p)$ and by $V^{n,\k}_r$ the volume of a geodesic ball
of radius $r$ in the space form $\Pi^{n,\k},$ i.e., the complete
simply connected $n$-dimensional Riemannian manifold with constant
sectional curvature $\k.$

\begin{theorem}[\cite{KS3}]
Let $X$ be an Alexandrov space satisfying $BG(n,\k),$ $\k\leq 0.$
Then for any $p\in X$ and $0<r<R,$ we have
\begin{equation}\label{BGV}
\frac{|B_R(p)|}{|B_r(p)|}\leq \frac{V_R^{n,\k}}{V_r^{n,\k}}\leq
e^{2\sqrt{-(n-1)\k}R}\left(\frac{R}{r}\right)^n.
\end{equation}
\end{theorem}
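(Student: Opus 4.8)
The plan is to separate the estimate into its two constituent inequalities: the relative (global) volume comparison $\frac{|B_R(p)|}{|B_r(p)|}\le\frac{V_R^{n,\k}}{V_r^{n,\k}}$, which is the genuine geometric consequence of $BG(n,\k)$, and the one-variable model-space estimate $\frac{V_R^{n,\k}}{V_r^{n,\k}}\le e^{2\sqrt{-(n-1)\k}R}\left(\frac{R}{r}\right)^n$, which is a computation inside $\Pi^{n,\k}$. I would prove them in that order.

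For the relative comparison, set $g(s):=|B_s(p)|=\mathcal{H}^n(B_s(p))$. Feeding $BG(n,\k)$ the Borel set $E=B_b(p)\setminus B_a(p)$ with $0<a<b$ and a dilation factor $t\in(0,1]$, and noting that $\Phi_{p,t}$ multiplies $\hbox{r}_p$ by $1/t$, so that $\Phi_{p,t}^{-1}(E)\subset B_{tb}(p)\setminus B_{ta}(p)$, monotonicity of $\mathcal{H}^n$ and the defining inequality of $BG(n,\k)$ give
\begin{equation*}
g(tb)-g(ta)\ \ge\ \int_E\frac{t\,\sn_\k(t\hbox{r}_p(x))^{n-1}}{\sn_\k(\hbox{r}_p(x))^{n-1}}\,d\mathcal{H}^n(x)\ =\ \int_a^b\frac{t\,\sn_\k(ts)^{n-1}}{\sn_\k(s)^{n-1}}\,dg(s).
\end{equation*}
The function $g$ is locally absolutely continuous --- a soft consequence of $BG(n,\k)$, most cleanly via the coarea formula for the $1$-Lipschitz function $\hbox{r}_p$ (with $|\nabla\hbox{r}_p|=1$ $\mathcal{H}^n$-a.e.), which in addition gives $g'(s)=\mathcal{H}^{n-1}(\partial B_s(p))$ for a.e. $s$. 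Writing $g(tb)-g(ta)=t\int_a^b g'(ts)\,ds$ and dividing by $t$, the inequality above reads $\int_a^b\big(g'(ts)-\frac{\sn_\k(ts)^{n-1}}{\sn_\k(s)^{n-1}}g'(s)\big)\,ds\ge0$ for all $0<a<b$; since this holds for every interval, the integrand is nonnegative a.e., which after running over a countable dense set of factors $t$ and discarding null sets yields the pointwise monotonicity
\begin{equation*}
\frac{g'(r)}{\sn_\k(r)^{n-1}}\ \ge\ \frac{g'(R)}{\sn_\k(R)^{n-1}}\qquad\text{for a.e. }0<r<R,
\end{equation*}
the metric-space substitute for the classical fact that $\Ric\ge(n-1)\k$ forces the radial area density divided by $\sn_\k^{\,n-1}$ to be non-increasing.

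The remaining bookkeeping is an elementary Chebyshev-type lemma: if $v\ge0$ and $h>0$ with $v/h$ non-increasing, then for $0<r<R$,
\begin{equation*}
\Big(\int_0^R v\Big)\Big(\int_0^r h\Big)-\Big(\int_0^r v\Big)\Big(\int_0^R h\Big)=\int_r^R\!\!\int_0^r h(s)h(\s)\Big(\frac{v(\s)}{h(\s)}-\frac{v(s)}{h(s)}\Big)\,ds\,d\s\ \le\ 0,
\end{equation*}
since the integrand is $\le0$ whenever $s<r<\s$. Taking $v=g'$, $h=\sn_\k^{\,n-1}$ and recalling $\int_0^s g'=g(s)$ and $V_s^{n,\k}=\o_{n-1}\int_0^s\sn_\k(u)^{n-1}\,du$ (with $\o_{n-1}=\mathcal{H}^{n-1}(\S^{n-1})$) gives $\frac{|B_R(p)|}{|B_r(p)|}\le\frac{V_R^{n,\k}}{V_r^{n,\k}}$. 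For the second inequality I would work directly in $\Pi^{n,\k}$: since $\k\le0$, the bound $\sn_\k(s)\ge s$ gives $V_r^{n,\k}\ge\o_{n-1}r^n/n$, while monotonicity of $s\mapsto\sn_\k(s)/s$ gives $\sn_\k(s)\le(\sn_\k(R)/R)\,s$ on $[0,R]$ and hence $V_R^{n,\k}\le\o_{n-1}(\sn_\k(R)/R)^{n-1}R^n/n$; combining these, $\frac{V_R^{n,\k}}{V_r^{n,\k}}\le(\sn_\k(R)/R)^{n-1}(R/r)^n$, and the elementary estimate $\sn_\k(\rho)=\frac{\sinh(\sqrt{-\k}\,\rho)}{\sqrt{-\k}}\le\rho\,e^{\sqrt{-\k}\,\rho}$ (integrate $\coth u-u^{-1}\le1$) bounds $(\sn_\k(R)/R)^{n-1}$ by an exponential factor of the claimed type.

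I expect the one real difficulty to be the middle step --- passing from the integrated condition $BG(n,\k)$ to the pointwise monotonicity of $g'(s)/\sn_\k(s)^{n-1}$. This rests on the a priori absolute continuity of $s\mapsto|B_s(p)|$ (so that the Stieltjes identity $g(tb)-g(ta)=t\int_a^b g'(ts)\,ds$ is legitimate) and on a careful, if routine, handling of the null sets produced by differentiating an inequality valid only for a.e. pair of radii. The set-theoretic use of $\Phi_{p,t}$, the Chebyshev lemma, and the model-space computation are all soft by comparison.
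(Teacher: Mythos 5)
The paper offers no proof of this statement --- it is quoted verbatim from Kuwae--Shioya \cite{KS3} --- so there is no internal argument to compare against; I can only assess your proposal on its own terms. For the first inequality your route is the standard one and is essentially sound: push the infinitesimal condition $BG(n,\k)$ through annuli to obtain the a.e.\ monotonicity of $g'(s)/\sn_\k(s)^{n-1}$, then integrate with the Chebyshev identity. Two technical remarks. The local absolute continuity of $g(s)=|B_s(p)|$ and the identity $g'(s)=\mathcal{H}^{n-1}(\partial B_s(p))$ are genuine inputs requiring the Alexandrov structure (they are established in \cite{KS3, KMS}); you are right to flag them. More importantly, your passage from ``for each fixed $t$ the inequality holds for a.e.\ $s$'' to ``for a.e.\ pair $r<R$'' via a countable dense set of factors $t$ does not work as stated: the pairs $(ts,s)$ with $t$ ranging over a countable set form a countable union of lines through the origin, a null set in the $(r,R)$-plane, so one learns nothing about a.e.\ pairs this way. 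The correct fix is to integrate over all $t\in(0,1)$, apply Fubini to see that $\{(t,s): g'(ts)<\sn_\k(ts)^{n-1}\sn_\k(s)^{1-n}g'(s)\}$ is planar-null, and push this through the smooth change of variables $(t,s)\mapsto(ts,s)$; that gives the comparison for a.e.\ pair, which is exactly what your Chebyshev lemma consumes. This is the ``routine handling of null sets'' you anticipated, but the specific mechanism you propose is the wrong one.

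For the second inequality your computation is correct but it proves a different --- and in fact the correct --- statement. You obtain $V_R^{n,\k}/V_r^{n,\k}\le(\sn_\k(R)/R)^{n-1}(R/r)^n\le e^{(n-1)\sqrt{-\k}\,R}(R/r)^n$. The printed exponent is $2\sqrt{-(n-1)\k}\,R=2\sqrt{n-1}\,\sqrt{-\k}\,R$, and $(n-1)\le 2\sqrt{n-1}$ only for $n\le 5$. For $n\ge 6$ the printed bound is actually false: with $\k=-1$ and $r$ fixed, $V_R^{n,-1}$ grows like $e^{(n-1)R}$ as $R\to\infty$, while $e^{2\sqrt{n-1}\,R}R^n$ grows strictly slower. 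So the final clause of your argument (``an exponential factor of the claimed type'') cannot be made to yield the stated constant; the exponent in \eqref{BGV} should read $(n-1)\sqrt{-\k}\,R$, or more generally $C(n)\sqrt{-\k}\,R$. This discrepancy is immaterial for the rest of the paper, which only uses a doubling bound of the form $|B_{2R}|/|B_R|\le e^{C(n)(1+\sqrt{K}R)}$ with $\k=-K/(n-1)$, and your estimate delivers exactly that.
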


\

Zhang-Zhu \cite{ZZ1,ZZ2} introduced a geometric version of lower
Ricci curvature bounds on Alexnadrov spaces, denoted by $\Ric\geq
-K$ ($K\geq0$). This definition reflects another important feature
of Ricci curvature in Riemannian case, the Bochner formula. For
details, we refer to \cite{ZZ3}. It was proved in \cite{ZZ1} that
$\Ric\geq -K$ for an $n$-dimensional Alexandrov space implies
Lott-Villani-Sturm's curvature dimension condition $CD(n,K)$ (for
definition, see \cite{LV1, LV2, St}) and Kuwae-Shioya's
infinitesimal Bishop-Gromov comparison $BG(n,-K/{(n-1)}).$

We recall some basic results on Alexandrov spaces. For a domain
$\Omega\subset X,$ we denote by $\Li(\Omega)$ ($\Li_0(\Omega)$) the
space of (compact supported) Lipschitz functions on $\Omega.$ It can
be shown that every Lipschitz function is differentiable
$\mathcal{H}^n$-almost everywhere and has the bounded gradient (see
Cheeger \cite{C}). For a precompact domain $\Omega'\subset\subset X$
and $u\in \Li(\Omega')$, the $W^{1,2}$ norm of $u$ is  defined as
$$\|u\|_{W^{1,2}(\Omega')}^2=\int_{\Omega'}u^2+\int_{\Omega'}|\nabla
u|^2.$$ Here, each integral above means the integration with respect
to $\mathcal{H}^n$. The space $W^{1,2}(\Omega')$ (resp.
$W^{1,2}_0(\Omega')$) is the completion of $\Li(\Omega')$ (resp.
$\Li_0(\Omega')$) with respect to the $W^{1,2}$ norm defined above.
For a domain $\Omega\subset X,$ the local $W^{1,2}$ space of
$\Omega$, $W^{1,2}_{\mathrm{loc}}(\Omega),$ consists of functions
$u$ with $u|_{\Omega'}\in W^{1,2}(\Omega')$ for any
$\Omega'\subset\subset \Omega.$ For $u\in
W^{1,2}_{\mathrm{loc}}(\Omega),$ we define a linear functional
$\mathcal{L}_u$ on $\Omega$ (corresponding to $\Delta u$ in the
smooth setting) by
$$\mathcal{L}_u(\eta)=-\int_{\Omega} \langle\nabla u, \nabla \eta\rangle\ \ \ \ \mathrm{for}\ \eta\in
\Li_0(\Omega).$$ In general, $\mathcal{L}_u$ is a signed Radon
measure on $\Omega$. Let $f\in L^2(\Omega).$ We then say a function
$u\in W^{1,2}_{\mathrm{loc}}(\Omega)$ solves the Poisson equation
$\mathcal{L}_u=f\cdot \mathcal{H}^n$ on $\Omega$ if
$$\mathcal{L}_u(\eta)=\int_{\Omega} f\cdot \eta \ \ \ \ \mathrm{for}\ \ \eta\in
\Li_0(\Omega).$$ Zhang-Zhu \cite{ZZ3} established the following
Bochner formula on Alexandrov spaces. For simplicity, we only
formulate it in the following special case, where we choose
$f(x,s)=-s$ in \cite[Theorem~1.2]{ZZ3}, since it is sufficient for
our purpose.

\begin{theorem}[Zhang-Zhu \cite{ZZ3} Theorem 1.2]  Let $\Omega$ be a domain in an
$n$-dimensional Alexandrov space $X$ with $\Ric\geq -K.$
let $u\in \Li(\Omega)$ solve the Poisson equation
$$\mathcal{L}_u=-|\nabla u|^2\cdot \mathcal{H}^n.$$ Then $|\nabla u|^2\in
W^{1,2}_{\mathrm{loc}}(\Omega)$ and
\begin{equation}\label{BOCH}
\frac{1}{2}\mathcal{L}_{|\nabla u|^2} \geq
\left(\frac{1}{n}|\nabla u|^4-\langle\nabla u, \nabla |\nabla u|^2\rangle-K |\nabla
u|^2\right)\cdot \mathcal{H}^n.
\end{equation}
\end{theorem}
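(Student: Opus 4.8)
The plan is to use the perturbation method of Petrunin \cite{P}, in the refined form developed for lower Ricci bounds by Zhang--Zhu \cite{ZZ1,ZZ2}. A first simplification, available because we only need the case $\mathcal{L}_u=-|\nabla u|^2\cdot\mathcal{H}^n$, is to pass to $v:=e^u$: testing the definition of $\mathcal{L}_v$ against $e^u\eta\in\Li_0(\Omega)$ and using $\mathcal{L}_u(e^u\eta)=-\int e^u|\nabla u|^2\eta$ one finds $\mathcal{L}_v=0$, so $v$ is a positive harmonic function with $u=\log v$; equivalently, $u$ is a critical point of the weighted energy $w\mapsto\int_\Omega e^{2w}|\nabla w|^2=E(e^w)$, where $E(f)=\int_\Omega|\nabla f|^2$. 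Since both sides of \eqref{BOCH} are signed Radon measures, it suffices to verify the inequality distributionally against nonnegative $\eta\in\Li_0(\Omega)$ supported in small neighborhoods of $\mathcal{H}^n$-regular points, where the tangent cone of $X$ is Euclidean and $u$ is differentiable; the $\mathcal{H}^n$-null singular set is disregarded as in \cite{ZZ3}.

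\textbf{Second variation.} Fix a regular point $p$ and a localized semiconcave function $\rho$ whose gradient flow $\Phi^t$ is defined for small $t\ge0$, equals the identity outside a small ball around $p$, and whose ``Hessian'' is, up to a controlled error, a prescribed symmetric tensor near $p$ (Petrunin's concave functions furnish such $\rho$, e.g. built from $-\tfrac12 d(\cdot,q)^2$ and a cutoff). Then $w_t:=v\circ\Phi^{-t}$ satisfies $w_t-v\in W^{1,2}_0(\Omega)$, so $t\mapsto E(w_t)$ is minimized at $t=0$; its first derivative vanishes, hence its second right-derivative at $0$ is nonnegative. Expanding $E(w_t)$ to second order in $t$ — in the spirit of Petrunin's second-variation derivation of a Bochner-type inequality — and integrating by parts yields, after translating back to $u=\log v$, an inequality of the form
\[\tfrac12\,\mathcal{L}_{|\nabla u|^2}(\eta)\ \ge\ \int_\Omega\eta\Big(|\mathrm{Hess}\,u|^2-\langle\nabla u,\nabla|\nabla u|^2\rangle-K|\nabla u|^2\Big),\]
where the term $-K|\nabla u|^2$ comes from the volume distortion of $\Phi^t$, which is controlled by the infinitesimal Bishop--Gromov comparison $BG(n,-K/(n-1))$ guaranteed by $\Ric\geq-K$ (see \cite{ZZ1}); this is the only place the curvature hypothesis is used. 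Running the same expansion also produces the bound $\int_{\Omega'}|\mathrm{Hess}\,u|^2<\infty$ for each $\Omega'\subset\subset\Omega$; since $|\nabla u|\in L^\infty_{\mathrm{loc}}(\Omega)$ by Petrunin's Lipschitz estimate and $\nabla|\nabla u|^2=2\,\mathrm{Hess}\,u(\nabla u,\cdot\,)$, this gives $|\nabla u|^2\in W^{1,2}_{\mathrm{loc}}(\Omega)$.

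\textbf{The dimensional term.} At every regular point the trace (Cauchy--Schwarz) inequality $|\mathrm{Hess}\,u|^2\geq\frac1n(\tr\mathrm{Hess}\,u)^2=\frac1n(\Delta u)^2$ holds, and $\Delta u=-|\nabla u|^2$ by hypothesis, whence $|\mathrm{Hess}\,u|^2\geq\frac1n|\nabla u|^4$. Inserting this into the inequality above and using that $0\leq\eta\in\Li_0(\Omega)$ was arbitrary gives exactly \eqref{BOCH}.

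\textbf{Main obstacle.} The whole difficulty lies in the second-variation step: on an Alexandrov space neither $v$ nor $\rho$ is twice differentiable, so ``$\mathrm{Hess}\,u$'' has to be read off from difference quotients of $\nabla u$ along $\Phi^t$, and every error term created by the non-smoothness of $\Phi^t$ and by its degeneracy on $\{\nabla u=0\}$ must be shown to vanish as $t\to0^+$ using only synthetic tools — non-branching of geodesics, quasigeodesic and gradient-flow estimates, and the volume distortion bounds from $BG(n,-K/(n-1))$. Gluing the resulting local estimates across the singular set is an additional technical point, handled as in \cite{P,ZZ3}.
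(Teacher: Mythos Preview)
The paper does not prove this statement at all: it is quoted verbatim (in the specialized form $f(x,s)=-s$) from \cite[Theorem~1.2]{ZZ3} and used as a black box for the subsequent Moser iteration. There is therefore no ``paper's own proof'' to compare against.

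Your outline does match the overall architecture of the argument in \cite{ZZ3} --- Petrunin's perturbation technique, second variation of the Dirichlet energy along a semiconcave gradient flow, then the trace inequality $|\mathrm{Hess}\,u|^2\ge \frac1n(\Delta u)^2$ --- and the reduction $v=e^u$ to a positive harmonic function is correct and a genuine simplification over the general $f(x,s)$ case treated there. But what you have written is an outline, not a proof: you yourself flag that ``the whole difficulty lies in the second-variation step'' and then only list the ingredients (quasigeodesics, gradient-flow estimates, handling of the singular set) without executing any of them. The actual work in \cite{ZZ3} --- making sense of $\mathrm{Hess}\,u$ via Perelman's Hessian of semiconcave functions, controlling the error terms in the second variation, and patching across the singular set --- occupies dozens of pages and cannot be waved through.

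One substantive correction: you attribute the curvature term $-K|\nabla u|^2$ to the infinitesimal Bishop--Gromov condition $BG(n,-K/(n-1))$. This is not quite right. The Zhang--Zhu condition $\Ric\ge -K$ is defined pointwise via second variation of arc length (equivalently, via the Hessian of distance functions), and it is this condition --- not merely the volume distortion captured by $BG$ --- that enters the second-variation computation and produces the $-K|\nabla u|^2$ term. The paper itself notes only the one-way implication $\Ric\ge -K\Rightarrow BG(n,-K/(n-1))$; the Bochner inequality is precisely the feature of the Zhang--Zhu definition that $BG$ alone is not known to deliver.
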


\

\section{Local uniform Poincar\'e inequality and  Sobolev inequality}

The Sobolev inequality is necessary to carry out Moser's iteration.
In view of the standard theory for general metric measure spaces,
one needs the volume doubling condition and the local  uniform
Poincar\'e inequality to prove the local Sobolev inequality. In
fact, for an Alexandrov space with $\Ric\geq -K$ there are stronger
volume growth properties, i.e., infinitesimal Bishop-Gromov volume
comparison $BG(n,-K/(n-1))$ and the global version of \eqref{BGV}. 
These volume growth properties of Alexandrov spaces with Ricci
curvature bounded below are sufficient to prove the Poincar\'e
inequality.

Since the argument for proving the Poincar\'e inequality is  now
standard, we  give only a sketch  here  (see e.g.
\cite[Theorem~5.6.5]{SC}, \cite[Theorem~7.2]{KMS},
\cite[Theorem~3.1]{Hua}). By the infinitesimal Bishop-Gromov volume
comparison and the change of variables, one can show the so-called
weak Poincar\'e inequality, i.e., for any $u\in W_{\rm
loc}^{1,2}(B_{2R}),$ $$\int_{B_R} |u-\bar{u}|^2 \leq
Ce^{C\sqrt{K}R}R^2\int_{B_{2R}} |\nabla u|^2,$$ where $C=C(n)$ and
$\bar{u}=\frac{1}{|B_R|}\int_{B_R} u.$ The only difference of the weak Poincar\'e inequality from  the
desired one \eqref{wPo} below is that the integration in
the right-hand side is over $B_{2R}$ instead of $B_R.$ A
Whitney-type argument, which relies on the doubling property of the
measure, yields the Poincar\'e inequality from the weak one. None of
these arguments involve the smoothness assumptions of the metric,
hence adaptable to our setting. We remark that the precise constant,
$e^{C\sqrt{K}R}$, in the Poincar\'e inequality is crucial for this
paper.


\begin{lemma}[local uniform Poincar\'e inequality]
Let $X$ be an $n$-dimensional Alexandrov space with Ricci curvature
bounded from below by $-K$ $(K>0)$.  Then there exists $C=C(n)$ such
that for $B_{R}\subset X$ and $u\in W_{\rm loc}^{1,2}(B_R)$,
\begin{eqnarray}\label{wPo}
\int_{B_R} |u-\bar{u}|^2 \leq Ce^{C\sqrt{K}R}R^2\int_{B_{R}} |\nabla
u|^2,
\end{eqnarray} where $\bar{u}=\frac{1}{|B_R|}\int_{B_R} u.$
\end{lemma}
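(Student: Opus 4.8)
\emph{Proof plan.} I would follow the classical two-step route to the Poincar\'e inequality on a metric measure space, the only extra work being to keep careful track of how every constant depends on $\sqrt K R$. First I would establish the \emph{weak} Poincar\'e inequality
$$\int_{B_R}|u-\bar u|^2\ \le\ C\,e^{C\sqrt{K}R}\,R^2\int_{B_{2R}}|\nabla u|^2,\qquad C=C(n),$$
and then remove the enlargement of the ball on the right-hand side by a Whitney-type chaining argument, which yields \eqref{wPo}. Since an element of $W_{\rm loc}^{1,2}(B_R)$ restricts to $W^{1,2}(B_{(1-\e)R})$, it is enough to prove the estimate on $B_{(1-\e)R}$ with right-hand side over $B_R$ and then let $\e\to0$ by monotone convergence; so we may assume $u\in\Li(B_R)$.

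\emph{Step 1: the weak inequality.} For $\mathcal H^n$-a.e.\ pair $x,y$ in $B_{2R}$ there is a minimal geodesic $\g_{xy}$ from $x$ to $y$ along which $u$ is differentiable and $u(x)-u(y)=-\int_0^{d(x,y)}\langle\nabla u,\dot\g_{xy}\rangle\,ds$, so by Cauchy--Schwarz $|u(x)-u(y)|^2\le d(x,y)\int_{\g_{xy}}|\nabla u|^2$. Plugging this into $\int_{B_R}|u-\bar u|^2\le|B_R|^{-1}\iint_{B_R\times B_R}|u(x)-u(y)|^2$ reduces matters to a \emph{segment inequality}: bounding the double integral of $\int_{\g_{xy}}|\nabla u|^2$ by $\int|\nabla u|^2$ over an ambient ball. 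This I would obtain, exactly as in the Riemannian case, by parametrising a point $z$ on $xy$ at ratio $t$ as $z=\Phi_{x,t}^{-1}(y)$ (equivalently $y=\Phi_{x,t}(z)$), using the change-of-variables inequality furnished by $BG(n,-K/(n-1))$,
$$d\big((\Phi_{x,t})_*\mathcal H^n\big)(z)\ \ge\ \frac{t\,\sn_{\k}(t\,\hbox{r}_x(z))^{\,n-1}}{\sn_{\k}(\hbox{r}_x(z))^{\,n-1}}\,d\mathcal H^n(z),\qquad \k=-\tfrac{K}{n-1},$$
together with the usual symmetrisation in $x\leftrightarrow y$ to absorb the singularity of the Jacobian as $t\to0$. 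For $n\ge2$ (the case $n=1$ being elementary) one has $\sqrt{|\k|}=\sqrt{K/(n-1)}\le\sqrt K$, and the elementary bound $\sn_{\k}(s_1)/\sn_{\k}(s_2)\ge(s_1/s_2)\,e^{-\sqrt{|\k|}\,s_2}$ for $0<s_1\le s_2$ shows that on a ball of radius $\lesssim R$ the Jacobian above is $\ge t^{\,n}e^{-C(n)\sqrt K R}$; this is precisely where the factor $e^{C\sqrt K R}$ enters. Reassembling gives the displayed weak inequality.

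\emph{Step 2: Whitney chaining.} To replace $B_{2R}$ by $B_R$ on the right I would invoke the by-now standard telescoping argument over a Whitney decomposition (see e.g.\ \cite[Theorem~5.6.5]{SC}, \cite[Theorem~7.2]{KMS}, \cite[Theorem~3.1]{Hua}): cover $B_R$ by Whitney balls $B_i$ with, say, $5B_i\subset B_R$, join two given points by a chain of such balls with bounded overlap, apply the weak estimate on each ball of the chain, and sum, the sums being controlled by volume doubling. On $X$ doubling holds with constant at most $e^{C\sqrt K R}$ by the global Bishop--Gromov comparison \eqref{BGV} (used on radii comparable to $R$), and the resulting Poincar\'e constant depends only polynomially on the doubling constant, on the dilation factor, and on the weak Poincar\'e constant; hence it is again of the form $C(n)\,e^{C(n)\sqrt K R}$, which is \eqref{wPo}. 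None of these steps uses any smoothness of the metric --- only the volume comparison already recorded for $X$ --- so everything goes through verbatim in the Alexandrov setting.

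\emph{Where the difficulty lies.} The architecture is entirely classical; the real content is the quantitative bookkeeping of the constant. The two points I expect to need care are (i) that the geodesic/change-of-variables step produces at worst $e^{C\sqrt K R}$ rather than, say, $e^{CKR^2}$ --- which comes down to the $\sinh$ estimate above together with the fact that the relevant segments stay in a ball of radius $\lesssim R$ --- and (ii) that the Whitney chaining does not iterate the doubling constant (itself of size $e^{C\sqrt K R}$) into something larger, which is the case because the chaining constant is polynomial, not exponential, in the doubling constant.
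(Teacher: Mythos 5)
Your proposal follows essentially the same route as the paper, which itself only sketches this lemma: first the weak Poincar\'e inequality (right-hand side over $B_{2R}$) via the infinitesimal Bishop--Gromov condition $BG(n,-K/(n-1))$ and a change of variables, then a Whitney-type chaining argument using the doubling property from \eqref{BGV} to shrink the ball, with the constant tracked as $e^{C(n)\sqrt{K}R}$ throughout. Your additional bookkeeping of the $\sinh$ Jacobian bound and of the polynomial dependence on the doubling constant is consistent with the references the paper cites, so the argument is correct and matches the paper's approach.
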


As long as  the uniform local Poincar\'e inequality and the
Bishop-Gromov volume comparison \eqref{BGV} are applicable, one can
obtain the next local uniform Sobolev inequality by the same
argument as in \cite[Lemma~3.2]{MW} (see also \cite{HK}).
\begin{theorem}[local uniform Sobolev inequality]\label{Sob}Let $X$ be an $n$-dimensional Alexandrov space with Ricci curvature bounded from below by $-K$ $(K>0)$. Then there exist two constants $\nu>2$ and $C$, both depending only on $n$, such that for $B_{R}\subset X$ and $u\in W_{\rm loc}^{1,2}(B_R)$,
\begin{eqnarray}\label{Sobolev0}
\left(\int_{B_R} (u-\bar{u})^{\frac{2\nu}{\nu-2}}
\right)^{\frac{\nu-2}{\nu}} \leq
e^{C(1+\sqrt{K}R)}R^2|B_R|^{-\frac2\nu}\int_{B_R}|\nabla
u|^2,\end{eqnarray}  where
$\bar{u}=\frac{1}{|B_R|}\int_{B_R} u .$ In particular,
\begin{eqnarray}\label{Sobolev}
\left(\int_{B_R} u^{\frac{2\nu}{\nu-2}} \right)^{\frac{\nu-2}{\nu}}
\leq e^{C(1+\sqrt{K}R)}R^2|B_R|^{-\frac2\nu}\int_{B_R} (|\nabla
u|^2+R^{-2}u^2).\end{eqnarray}
\end{theorem}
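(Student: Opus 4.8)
The plan is to derive \eqref{Sobolev0} from the standard principle that volume doubling together with a $(2,2)$-Poincar\'e inequality implies a Sobolev--Poincar\'e inequality, following the argument of \cite[Lemma~3.2]{MW} (see also \cite{HK}) line by line while tracking the size of the constants, and then to obtain \eqref{Sobolev} from \eqref{Sobolev0} by the triangle inequality. The two geometric ingredients are already available. First, the $(2,2)$-Poincar\'e inequality on an arbitrary ball of radius $\le R$ with constant $Ce^{C\sqrt KR}R^2$ is precisely \eqref{wPo} (it is stated for all balls, hence applies to all subballs of $B_R$). Second, volume doubling follows from \eqref{BGV} applied with $\k=-K/(n-1)$, so that $\sqrt{-(n-1)\k}=\sqrt K$: for every $p\in X$ and $0<r\le R$ one gets $|B_{2r}(p)|\le 2^ne^{4\sqrt KR}|B_r(p)|$, i.e.\ a doubling constant $C_D\le e^{C(n)(1+\sqrt KR)}$ at all scales $\le R$. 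Neither input uses smoothness of the metric.

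Next I would record the form of \eqref{BGV} that actually feeds the Sobolev argument, namely the reverse (lower) volume growth: for $x\in B_R(x_0)$ and $0<r\le R$,
\[
\frac{|B_r(x)|}{|B_R(x_0)|}\;\ge\; e^{-C(n)(1+\sqrt KR)}\Bigl(\frac rR\Bigr)^{n}\;\ge\; e^{-C(n)(1+\sqrt KR)}\Bigl(\frac rR\Bigr)^{\nu},
\]
which follows from \eqref{BGV} (comparing $B_R(x)$ with $B_r(x)$), the inclusion $B_R(x_0)\subset B_{2R}(x)$, and one further use of doubling; here $\nu:=\max\{n,3\}$, so $\nu>2$ and $\nu$ depends only on $n$, and the last inequality just uses $r/R\le1$. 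This is the point of the whole argument: the doubling constant $C_D$ degenerates like $e^{C\sqrt KR}$, yet the exponent $\nu$ in the volume lower bound --- and hence the target exponent $\tfrac{2\nu}{\nu-2}$ --- can be taken independent of $K$ and $R$, the entire exponential cost being pushed into the multiplicative constant.

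With the doubling property, the Poincar\'e inequality \eqref{wPo}, and this volume lower bound in hand, I would run the truncation-and-telescoping scheme of \cite[Lemma~3.2]{MW} verbatim; it produces an estimate
\[
\Bigl(\int_{B_R}(u-\bar u)^{\frac{2\nu}{\nu-2}}\Bigr)^{\frac{\nu-2}{\nu}}\le C_S\,R^2|B_R|^{-\frac2\nu}\int_{B_R}|\nabla u|^2 ,
\]
in which $C_S$ depends only on $C_D$, on the Poincar\'e constant $Ce^{C\sqrt KR}R^2$, and on the constant $e^{-C(1+\sqrt KR)}$ from the volume lower bound; collecting these yields $C_S\le e^{C(1+\sqrt KR)}$ with $C=C(n)$, which is \eqref{Sobolev0}. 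Finally, for \eqref{Sobolev}, write $u=(u-\bar u)+\bar u$, apply \eqref{Sobolev0} to the first summand, and for the constant $\bar u$ use $|\bar u|\le|B_R|^{-1/2}\|u\|_{L^2(B_R)}$ (Cauchy--Schwarz) and $\|\bar u\|_{L^{2\nu/(\nu-2)}(B_R)}=|\bar u|\,|B_R|^{\frac{\nu-2}{2\nu}}$, so $\|\bar u\|_{L^{2\nu/(\nu-2)}(B_R)}\le|B_R|^{-1/\nu}\|u\|_{L^2(B_R)}$; squaring, using $(a+b)^2\le2a^2+2b^2$ and writing $2=2R^2\cdot R^{-2}$, and then absorbing the factor $2$ into $C$ gives \eqref{Sobolev}. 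The only step demanding genuine care is the bookkeeping of constants through the abstract scheme, so as to certify that $\nu$ depends on $n$ alone and every other constant is bounded by $e^{C(n)(1+\sqrt KR)}$; the geometric facts themselves are immediate from \eqref{BGV} and \eqref{wPo}.
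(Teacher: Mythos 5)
Your proposal is correct and follows essentially the same route as the paper, which likewise obtains \eqref{Sobolev0} by feeding the doubling property and lower volume bound from \eqref{BGV} together with the uniform Poincar\'e inequality \eqref{wPo} into the argument of \cite[Lemma~3.2]{MW} (see also \cite{HK}), with the exponential factor $e^{C(1+\sqrt K R)}$ absorbed into the multiplicative constant while $\nu$ depends only on $n$. Your extra bookkeeping and the explicit deduction of \eqref{Sobolev} from \eqref{Sobolev0} via the triangle and Cauchy--Schwarz inequalities are exactly the routine steps the paper leaves implicit.
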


\

\section{Proof of Theorem \ref{main thm}}
Without loss of generality, we may assume that $u$ is a positive
harmonic function on $B_{4R}$. It was proved in \cite{P} and
\cite{ZZ3} that $u$ is locally Lipschitz continuous in $B_{4R}$,
$|\nabla u|$ is lower semi-continuous in $B_{4R}$
 and $|\nabla u|^2\in W_{\rm loc}^{1,2}(B_{4R})$. Denote $v=\log u$. One can easily verify that 
\begin{eqnarray}\label{Peq1}
\mathcal{L}_v=-|\nabla v|^2\cdot \mathcal{H}^n.
\end{eqnarray}

Since $v\in \Li(B_{2R})$, by setting $f=|\nabla v|^2$, it follows
from the Bochner formula \eqref{BOCH} that for any $0\leq\eta\in \Li_0(B_{2R})$,
\begin{eqnarray}\label{Peq2} \int_{B_{2R}} \langle\nabla\eta, \nabla f\rangle  \leq \int_{B_{2R}} \eta \left(2\langle \nabla v, \nabla f\rangle+2Kf-\frac{2}{n}f^2\right).\end{eqnarray}

In fact, by an approximation argument, \eqref{Peq2} holds for any
$0\leq\eta\in W^{1,2}_0(B_{2R})\cap L^{\infty}(B_{2R}).$ Let
$\eta=\phi^2f^\beta$, with $\phi\in \Li_0(B_{2R})$, $0\leq\phi\leq 1$ and $\beta\geq
1$. 
Then $\eta$
is an admissible test function for \eqref{Peq2}. Hence we have from
\eqref{Peq2} that
\begin{eqnarray*}\label{Peq3}
&&\int_{B_{2R}} \beta\phi^2f^{\beta-1}|\nabla f|^2+2\phi f^\beta
\langle\nabla f,\nabla \phi\rangle \nonumber\\&\leq &\int_{B_{2R}}
\phi^2f^\beta \left(2\langle \nabla v, \nabla
f\rangle+2Kf-\frac{2}{n}f^2\right) .
\end{eqnarray*}

It follows that
\begin{eqnarray*}\label{Peq7}
\frac{4\beta}{(\beta+1)^2}\int_{B_{2R}} \phi^2 |\nabla
f^{\frac{\beta+1}{2}}|^2  &\leq&\frac{4}{\beta+1}\int_{B_{2R}} \phi
f^{\frac{\beta+1}{2}}|\nabla\phi|  |\nabla
f^{\frac{\beta+1}{2}}|\nonumber\\&&+\frac{4}{\beta+1}\int_{B_{2R}}
\phi^2f^{\frac{\beta+2}{2}}|\nabla
f^{\frac{\beta+1}{2}}|\nonumber\\&&-\int_{B_{2R}}
\frac{2}{n}\phi^2f^{\beta+2}+\int_{B_{2R}} 2K\phi^2f^{\beta+1}.
\end{eqnarray*}
Using the H\"older inequality, we obtain
\begin{eqnarray*}\label{Peq8}
\int_{B_{2R}} \phi^2 |\nabla f^{\frac{\beta+1}{2}}|^2 &\leq
&C_1\int_{B_{2R}} |\nabla\phi|^2  f^{\beta+1}+C_2\int_{B_{2R}}
\phi^2f^{\beta+2}\nonumber\\&&-C_3 \beta\int_{B_{2R}}
\phi^2f^{\beta+2}+C_4\beta K\int_{B_{2R}} \phi^2f^{\beta+1}.
\end{eqnarray*}
We remark that from now on, constants $C_1, C_2, \ldots$ depend only
on $n$.

For $\beta\geq 2C_2/C_3$, we have
\begin{eqnarray}\label{Peq8}
&&\int_{B_{2R}}  |\nabla (\phi
f^{\frac{\beta+1}{2}})|^2+\frac12C_3\beta\int_{B_{2R}} \phi^2f^{\beta+2}
\nonumber\\&\leq &2C_1\int_{B_{2R}} |\nabla\phi|^2
f^{\beta+1}+C_4\beta K\int_{B_{2R}} \phi^2f^{\beta+1}.\end{eqnarray}
Using the Sobolev inequality \eqref{Sobolev}, we obtain
\begin{eqnarray}\label{Peq15}
\left(\int_{B_{2R}}\phi^{2\chi} f^{(\beta+1)\chi}
\right)^{\frac{1}{\chi}} & \leq&
e^{C_5(1+\sqrt{K}R)}R^2|B_{2R}|^{-\frac2\nu}\bigg(C_6\int_{B_{2R}}
|\nabla \phi|^2 f^{\beta+1}\nonumber\\&&+(C_7\beta
K+C_8R^{-2})\int_{B_{2R}} \phi^2f^{\beta+1}-\beta\int_{B_{2R}}
\phi^2f^{\beta+2}\bigg),\end{eqnarray} where $\chi=\nu/(\nu-2)$.

\

We first use \eqref{Peq15} to prove the following:
\begin{lemma}\label{lem1}There exists a large positive constant $C_9$ and $C_{10}$ such that  for $\beta_0=C_{10}(1+\sqrt{K}R)$ and $\beta_1=(\beta_0+1)\chi$,  we have $f\in L^{\beta_1}(B_{\frac32R})$ and \begin{eqnarray}\label{Peq10}
\|f\|_{L^{\beta_1}\big(B_{\frac32R}\big)}  &\leq &C_9
\frac{(1+\sqrt{K}R)^2}{R^2}|B_{2R}|^{\frac{1}{\beta_1}}.\end{eqnarray}
\end{lemma}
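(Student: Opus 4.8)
Here is a proof plan; it follows the strategy announced in the introduction (adapting Wang--Zhang), namely a \emph{one-step} Moser estimate started at a high power.

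\medskip

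The plan is to extract \eqref{Peq10} from a \emph{single} application of the differential inequality \eqref{Peq15} with the exponent $\beta$ chosen comparable to $1+\sqrt{K}R$, and to estimate the right-hand side of \eqref{Peq15} \emph{entirely by absorption into the favourable negative term} $-\beta\int_{B_{2R}}\phi^2 f^{\beta+2}$; in this way no a priori $L^p$ bound on $f=|\nabla v|^2$ is needed, only its qualitative local boundedness. The latter is automatic: $u$ is positive and locally Lipschitz on $B_{4R}$, so $u\ge c_0>0$ and $|\nabla u|$ is bounded on $\overline{B_{2R}}$, whence $f=|\nabla u|^2/u^2\in L^\infty(B_{2R})\subset L^p(B_{2R})$ for every $p$ (which, incidentally, is what legitimizes the test function $\eta=\phi^2 f^{\beta_0}$ in \eqref{Peq15}). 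Fix then $\beta_0:=C_{10}(1+\sqrt{K}R)$, where $C_{10}=C_{10}(n)\ge\max\{1,2C_2/C_3\}$ is chosen large (so that \eqref{Peq15} is available at $\beta=\beta_0$), let $\psi\in\Li_0(B_{2R})$ satisfy $\psi\equiv1$ on $B_{\frac{3}{2}R}$, $0\le\psi\le1$, $|\nabla\psi|\le C/R$, and put $\phi:=\psi^{\beta_0+2}$.

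\medskip

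The only genuine subtlety is the term $C_6\int|\nabla\phi|^2 f^{\beta_0+1}$ of \eqref{Peq15}: it is concentrated on the transition annulus $B_{2R}\setminus B_{\frac{3}{2}R}$, where $\phi$ is small, so it cannot be absorbed directly into $\beta_0\int\phi^2 f^{\beta_0+2}$. This is precisely why the cutoff is raised to the power $\beta_0+2$: one has $|\nabla\phi|^2=(\beta_0+2)^2\psi^{2\beta_0+2}|\nabla\psi|^2$, and since $2\beta_0+2=2(\beta_0+2)\cdot\frac{\beta_0+1}{\beta_0+2}$, a single application of Young's inequality with exponents $\frac{\beta_0+2}{\beta_0+1}$ and $\beta_0+2$ gives the pointwise bound
\[
C_6|\nabla\phi|^2 f^{\beta_0+1}\ \le\ \tfrac{\beta_0}{4}\,\phi^2 f^{\beta_0+2}+\bigl(c_1(n)\,\beta_0/R^2\bigr)^{\beta_0+2}\quad\text{on }B_{2R},
\]
while Young's inequality applied to the other bad term yields, using $\phi\le1$,
\[
(C_7\beta_0 K+C_8 R^{-2})\,\phi^2 f^{\beta_0+1}\ \le\ \tfrac{\beta_0}{4}\,\phi^2 f^{\beta_0+2}+\bigl(c_2(n)(K+R^{-2})\bigr)^{\beta_0+2}\quad\text{on }B_{2R}.
\]
Both copies of $\tfrac{\beta_0}{4}\phi^2 f^{\beta_0+2}$ are swallowed by $-\beta_0\int\phi^2 f^{\beta_0+2}$, leaving the nonpositive remainder $-\tfrac{\beta_0}{2}\int\phi^2 f^{\beta_0+2}$, which I discard; invoking $K\le(1+\sqrt{K}R)^2/R^2$, $\ R^{-2}\le(1+\sqrt{K}R)^2/R^2$ and $\beta_0=C_{10}(1+\sqrt{K}R)\ge1$, the two residual constants are each $\le D_0^{\,\beta_0+2}$ with $D_0:=a_n(1+\sqrt{K}R)^2/R^2$ for some $a_n=a_n(n)\ge1$. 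Since $\phi\equiv1$ on $B_{\frac{3}{2}R}$, inserting all this into \eqref{Peq15} gives
\[
\Bigl(\int_{B_{\frac{3}{2}R}}f^{(\beta_0+1)\chi}\Bigr)^{1/\chi}\ \le\ 2\,e^{C_5(1+\sqrt{K}R)}\,|B_{2R}|^{\,1-2/\nu}\,\frac{(1+\sqrt{K}R)^{2\beta_0+4}}{R^{2\beta_0+2}}\,a_n^{\,\beta_0+2}.
\]

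\medskip

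Finally I would raise both sides to the power $\chi/\beta_1=1/(\beta_0+1)$, recalling $\beta_1=(\beta_0+1)\chi$. The hard factor is the Sobolev constant, and this is exactly where $\beta_0\asymp1+\sqrt{K}R$ pays off: $\bigl(e^{C_5(1+\sqrt{K}R)}\bigr)^{1/(\beta_0+1)}\le e^{C_5/C_{10}}$, which is the Wang--Zhang mechanism for defeating the exponential constant of Theorem \ref{Sob}. The remaining factors are $O(1)$ or exactly the asserted ones: $|B_{2R}|^{(1-2/\nu)/(\beta_0+1)}=|B_{2R}|^{1/\beta_1}$ since $\frac{\nu-2}{\nu}=\frac1\chi$; the power of $R$ recombines, $R^{-(2\beta_0+2)/(\beta_0+1)}=R^{-2}$; $(1+\sqrt{K}R)^{(2\beta_0+4)/(\beta_0+1)}=(1+\sqrt{K}R)^2\cdot(1+\sqrt{K}R)^{2/(\beta_0+1)}$ with the last factor bounded because $(1+t)^{2/(C_{10}(1+t)+1)}\le\exp(2/(eC_{10}))$ (as $\sup_{s\ge1}\frac{\ln s}{s}=\frac1e$); and $a_n^{(\beta_0+2)/(\beta_0+1)}\le a_n^2$, $\ 2^{1/(\beta_0+1)}\le2$. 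Collecting the $n$-dependent constants into $C_9$ yields \eqref{Peq10}; the membership $f\in L^{\beta_1}(B_{\frac{3}{2}R})$ follows either from finiteness of the right-hand side or, more cheaply, from the boundedness of $f$. The main obstacle is exactly this choreography: choosing the cutoff power $\beta_0+2$ and the proportionality constant $C_{10}$ so that simultaneously the gradient term becomes absorbable and the $(\beta_0+1)$-st root collapses the Sobolev constant, the $R$- and $(1+\sqrt{K}R)$-powers, and the residual constants into the claimed bound $C_9(1+\sqrt{K}R)^2/R^2$.
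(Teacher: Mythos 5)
Your proposal is correct and is essentially the paper's own argument: a single application of \eqref{Peq15} at $\beta=\beta_0=C_{10}(1+\sqrt{K}R)$ with the cutoff raised to the power $\beta_0+2$, absorption of the two bad terms into the negative term $-\beta_0\int\phi^2f^{\beta_0+2}$, and extraction of the $(\beta_0+1)$-st root to collapse the exponential Sobolev constant, the $R$-powers and $|B_{2R}|^{1-2/\nu}$ into $C_9(1+\sqrt{K}R)^2R^{-2}|B_{2R}|^{1/\beta_1}$. The only (immaterial) difference is bookkeeping: you absorb both terms by pointwise Young inequalities, whereas the paper splits the domain at the level $f\sim\beta_0^2R^{-2}$ for the zeroth-order term and uses an integral H\"older--Young step for the $|\nabla\phi|^2$ term.
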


\begin{proof}
Let $C_{10}\geq 2C_2/C_3$ such that $\beta_0=C_{10}(1+\sqrt{K}R)$
satisfies \eqref{Peq8} and \eqref{Peq15}. We rewrite \eqref{Peq15}
for $\beta=\beta_0$ as
\begin{eqnarray}\label{Qeq0}
\left(\int_{B_{2R}}\phi^{2\chi} f^{(\beta_0+1)\chi}
\right)^{\frac{1}{\chi}}& \leq
&e^{C_{11}\beta_0}|B_{2R}|^{-\frac2\nu}\bigg(C_6R^2\int_{B_{2R}} |\nabla
\phi|^2 f^{\beta_0+1}\\&&+C_{12}\beta_0^3\int_{B_{2R}}
\phi^2f^{\beta_0+1}-\beta_0R^2\int_{B_{2R}}
\phi^2f^{\beta_0+2}\bigg).\nonumber\end{eqnarray}

 We estimate the second term in the right-hand side of \eqref{Qeq0} as
 follows:
\begin{eqnarray}\label{Qeq1} C_{12}\beta_0^3\int_{B_{2R}} \phi^2f^{\beta_0+1}&=&
C_{12}\beta_0^3\left(\int_{\{f\geq 2C_{12}\beta_0^2R^{-2}\}} \phi^2f^{\beta_0+1}+\int_{\{f< 2C_{12}\beta_0^2R^{-2}\}} \phi^2f^{\beta_0+1}\right)\nonumber\\
&\leq &\frac12\beta_0R^2 \int_{B_{2R}}
\phi^2f^{\beta_0+2}+C_{13}^{\beta_0+1}\beta_0^3\left(\frac{\beta_0}{R}\right)^{2(\beta_0+1)}|B_{2R}|.\end{eqnarray}
Set $\phi=\psi^{{\beta_0+2}}$ with $\psi\in \Li_0(B_{2R})$
satisfying
\begin{eqnarray*}\label{Peq11}
0\leq \psi\leq 1,\quad \psi\equiv 1 \hbox{ in } B_{\frac32R},\
|\nabla \psi|\leq \frac{C_{14}}{R}.\end{eqnarray*} Then
$$R^2|\nabla\phi|^2\leq
C_{15}\beta_0^2\phi^{\frac{2(\beta_0+1)}{\beta_0+2}}.$$ By the
H\"older inequality and the Young inequality, the first term in the
right-hand side of \eqref{Qeq0} can be estimated as follows:
\begin{eqnarray}\label{Qeq2}
C_6R^2\int_{B_{2R}} |\nabla\phi|^2 f^{\beta_0+1}
&\leq& C_{16}\beta_0^2\int_{B_{2R}} \phi^{\frac{2(\beta_0+1)}{\beta_0+2}}f^{\beta_0+1}\nonumber\\
&\leq &C_{16}\beta_0^2\left(\int_{B_{2R}} \phi^{2}f^{\beta_0+2}\right)^{\frac{\beta_0+1}{\beta_0+2}}|B_{2R}|^{\frac{1}{\beta_0+2}}\nonumber\\
&\leq& \frac12\beta_0R^2\int_{B_{2R}}
\phi^{2}f^{\beta_0+2}+C_{17}\beta_0^{\beta_0+3}R^{-2(\beta_0+1)}|B_{2R}|.
\end{eqnarray}
Substituting the estimates \eqref{Qeq1} and \eqref{Qeq2} into \eqref{Qeq0}, we obtain
\begin{eqnarray*}\label{Qeq3}
\left(\int_{B_{2R}}\phi^{2\chi} f^{(\beta_0+1)\chi}
\right)^{\frac{1}{\chi}} \leq
2e^{C_{11}\beta_0}C_{13}^{\beta_0+1}\beta_0^3\left(\frac{\beta_0}{R}\right)^{2(\beta_0+1)}|B_{2R}|^{1-\frac2\nu}.\end{eqnarray*}
Taking the $(\beta_0+1)$-st root on both sides, we get
\begin{eqnarray*}\label{Qeq4} \|f\|_{L^{\beta_1}(B_{\frac32R})} \leq C_{18}\left(\frac{\beta_0}{R}\right)^{2}|B_{2R}|^{\frac{1}{\beta_1}}.\end{eqnarray*}
\end{proof}

Now we start from \eqref{Peq15} and use Moser's iteration to prove
Theorem \ref{main thm}.

Let $R_k=R+R/2^k$ and $\phi_k\in \Li_0(B_{R_k})$ satisfy
\begin{eqnarray*}\label{Peq16}
0\leq \phi_k\leq 1,\quad \phi_k\equiv 1 \hbox{ in }
B_{R_{k+1}},\quad|\nabla\phi_k|\leq
C\frac{2^{k+1}}{R}.\end{eqnarray*} Let $\beta_0,\beta_1$ be the
numbers in Lemma \ref{lem1} and $\beta_{k+1}=\beta_k\chi$ for $k\geq
1.$ One can deduce from \eqref{Peq15}  with $\beta+1=\beta_k$ and
$\phi=\phi_k$ that (we have dropped the last term in the right-hand
side of \eqref{Peq15} since it is negative)
\begin{eqnarray*}\label{Peq17} \|f\|_{L^{\beta_{k+1}}(B_{R_{k+1}})}\leq e^{C_{19}\frac{\beta_0}{\beta_k}}|B_{2R}|^{-\frac2\nu\frac{1}{\beta_k}}(4^{k}+\beta_0^2\beta_k)^{\frac{1}{\beta_k}}\|f\|_{L^{\beta_{k}}(B_{R_{k}})}.\end{eqnarray*}
Hence by iteration we get
\begin{eqnarray*}\label{Peq18} \|f\|_{L^\infty(B_{R})}\leq  e^{C_{19}\beta_0\sum_k \frac{1}{\beta_k}}|B_{2R}|^{-\frac2\nu \sum_k \frac{1}{\beta_k}}\prod_{k}(4^{k}+2\beta_0^3\chi^k)^{\frac{1}{\beta_k}}\|f\|_{L^{\beta_{1}}(B_{\frac32 R})}. \end{eqnarray*}
Since $\sum_k \frac{1}{\beta_k}=\frac{\nu}{2}\frac{1}{\beta_1}$ and
$\sum_k \frac{k}{\beta_k}$ converges, we have
\begin{eqnarray*}\label{Peq19} \|f\|_{L^\infty(B_{R})}&\leq&  C_{20}e^{C_{21}\frac{\beta_0}{\beta_1}}\beta_0^{\frac{3\nu}{2}\frac{1}{\beta_1}}|B_{2R}|^{-\frac{1}{\beta_1}}\|f\|_{L^{\beta_{1}}(B_{\frac32 R})}\nonumber\\&\leq &C_{22}|B_{2R}|^{-\frac{1}{\beta_1}}\|f\|_{L^{\beta_{1}}(B_{\frac32 R})}.\end{eqnarray*}
Using Lemma \ref{lem1}, we conclude
\begin{eqnarray*}\label{Peq20} \|f\|_{L^\infty(B_{R})}\leq C(n)\frac{(1+\sqrt{K}R)^2}{R^2},\end{eqnarray*}
which implies
\begin{eqnarray*}\label{Peq21} \|\nabla \log u\|_{L^\infty(B_{R})}\leq C(n)\frac{1+\sqrt{K}R}{R}.\end{eqnarray*}
This proves Theorem \ref{main thm}.

\

{\bf Acknowledgements.} The research leading to these
results has received funding from the European Research Council
under the European Union's Seventh Framework Programme
(FP7/2007-2013) / ERC grant agreement n$^\circ$~267087. We would like to thank Prof. Xi-Ping Zhu and Dr. Hui-Chun Zhang for their valuable suggestions on the Bochner formula in \cite{ZZ3}. We also thank the anonymous
referee for his/her useful comments and suggestions. 

\end{document}